\theoremstyle{definition}
\newtheorem{definition}{Definition}
\theoremstyle{plain}
\newtheorem{thm}{Theorem}
\newtheorem{lemma}{Lemma}
\newtheorem{cor}{Corollary}
\newcommand{\R}{\mathbb R}
\newcommand{\order}{h} % number of hidden nodes
\newcommand{\ordertwo}{\tilde h} % number of models
\newcommand{\HLOM}{G} % hidden layer output matrix, or matrix with RBF outputs
\newcommand{\desiredoutput}{y^*} % desired outputs
\newcommand{\desiredoutputmatrix}{Y^*} % matrix of desired outputs
\journal{Neurocomputing}
\begin{document}

\begin{frontmatter}

%% Title, authors and addresses

%% use the tnoteref command within \title for footnotes;
%% use the tnotetext command for the associated footnote;
%% use the fnref command within \author or \address for footnotes;
%% use the fntext command for the associated footnote;
%% use the corref command within \author for corresponding author footnotes;
%% use the cortext command for the associated footnote;
%% use the ead command for the email address,
%% and the form \ead[url] for the home page:
%%
%% \title{Title\tnoteref{label1}}
%% \tnotetext[label1]{}
%% \author{Name\corref{cor1}\fnref{label2}}
%% \ead{email address}
%% \ead[url]{home page}
%% \fntext[label2]{}
%% \cortext[cor1]{}
%% \address{Address\fnref{label3}}
%% \fntext[label3]{}

\title{Universal Approximation Using Shuffled Linear Models}

%% use optional labels to link authors explicitly to addresses:
%% \author[label1,label2]{<author name>}
%% \address[label1]{<address>}
%% \address[label2]{<address>}

\author{Laurens Bliek}

\address{Faculty of Electrical Engineering, Mathematics, and
Computer Science,
Delft University of Technology,
Mekelweg 4, 2628 CD  Delft, The Netherlands}

\begin{abstract}
%% Text of abstract
This paper proposes a specific type of Local Linear Model, the Shuffled Linear Model (SLM), that can be used as a universal approximator. Local operating points are chosen randomly and linear models are used to approximate a function or system around these points. The model can also be interpreted as an extension to Extreme Learning Machines with Radial Basis Function nodes, or as a specific way of using Takagi-Sugeno fuzzy models. Using the available theory of Extreme Learning Machines, universal approximation of the SLM and an upper bound on the number of models are proved mathematically, and an efficient algorithm is proposed.
\end{abstract}

\begin{keyword}
%% keywords here, in the form: keyword \sep keyword
neural networks \sep system identification \sep nonlinear systems
%% MSC codes here, in the form: \MSC code \sep code
%% or \MSC[2008] code \sep code (2000 is the default)

\end{keyword}

\end{frontmatter}

%%
%% Start line numbering here if you want
%%
% \linenumbers

%% main text
%\section{}
%\label{}

\section{Introduction}

The approximation of nonlinear functions or systems is a problem that appears in many disciplines~\cite{nelles2001nonlinear} and that can be tackled by various approaches. One of these approaches is to decompose the problem into smaller problems and use a combination of simple solutions that only work for a small part of the problem. The combination of these local models should then approximate the global function or system. The Local Linear Model (LLM) is a key example of this strategy, where a dynamical system is partitioned into multiple operating regimes, and for each regime a simple linear model is used to model the system.

If the goal is to approximate a function, not to model a system, the use of artificial neural networks has been a popular approach. One of the most popular algorithms for approximating a function with neural networks is the backpropagation algorithm. The Extreme Learning Machine (ELM) algorithm as proposed by Huang et al.~\cite{huang2006extreme}, is an alternative algorithm which outperforms the backpropagation algorithm in many aspects. Instead of tuning the parameters of the hidden layer and output layer, the hidden layer parameters are initialised randomly and kept fixed while the output weights are trained by a linear least-squares method. Since very efficient algorithms are available for solving linear least-squares problems, the ELM algorithm is an efficient method for training a feedforward neural network.

Although ELMs can use nodes with many functions that do not at all resemble the activation functions observed in biological neurons~\cite{huang2006universal}, which makes ELM a very general approach, to the author's knowledge they have not yet been extended in such a way that they include LLMs. The activation functions can be very general, but the hidden network nodes typically take one of two forms: either additive nodes, or Radial Basis Function (RBF) nodes are used, while the output is a linear combination of these. %Using an activation function $g$, the output of the network is then either a linear combination of additive node outputs $g_i = g(a_i x + b_i)$ or of RBF node outputs $g_i = g(b_i||x-a_i||)$, giving an output $y=\sum_i{\beta_i g_i}$. 
Unfortunately, with this architecture, this type of network can not be made equivalent to a LLM as defined in this paper. To prove that the same ELM algorithm does work for LLMs, an extension has to be made.

The ELM algorithm is extended in this paper to include the use of LLMs, and the approximation capabilities are analysed and proved mathematically. Since ELMs use randomly initialised hidden layer parameters, the linear models used in the LLM in this paper will be accurate around a randomly chosen local point. This can be seen as shuffling a deck of cards and handing them to the players in a cardgame, where the cards consist of parameters that indicate the location and impact factors of the local points, while the players are the linear models. The proposed model in this paper is therefore called Shuffled Linear Model (SLM). Note that not the linear models themselves, but the localities where they are most valid are chosen randomly.

Extending the ELM algorithm to include LLMs gives several advantages, like a better physical interpretation and the potential to identify nonlinear systems, making the algorithm more fit for system identification. Besides this, the extension could be used in the area of fuzzy modelling. Fuzzy models are a popular approach to model nonlinear systems, and can be used to combine both rigid mathematical concepts and vaguer linguistic concepts. One of the most popular fuzzy models is the Takagi-Sugeno model~\cite{takagi1985fuzzy} (TSM). TSMs use linear consequences in their fuzzy rules, making them equivalent to LLMs under some circumstances. This is also shortly analysed in this paper, but the main point of this paper is the extension of the ELM algorithm.

The paper has the following structure: in Section \ref{sec:ELM}, a summary of the ELM approach will be given. Section \ref{sec:SLM} contains the proposed model with the definition of the SLM. Section \ref{sec:UA} provides proof of the approximation capabilities of the SLM and contains the proposed algorithm. Section \ref{sec:int} contains several ways to interpret the SLM. Conclusions are given in Section \ref{sec:concl}.

\section{ELM summary}
\label{sec:ELM}

The ELM architecture is similar to a feedforward neural network with one hidden layer. The output neurons have a linear activation function, while the hidden neurons have a nonlinear activation function. The main difference with traditional neural networks is in the tuning of the parameters: the hidden layer parameters are initialised randomly and remain fixed, while only the output weights are tuned. This leads to a linear least-squares problem.

Although the original ELM used neural alike hidden nodes, ELMs have been generalised to include many activation functions that are not neuron alike~\cite{huang2007convex}. The most common example is the \emph{Radial Basis Function} (RBF) $g(a,b,x) = e^{-b||x-a||^2}$, with parameters $a\in \mathbb R^n$, $b\in \mathbb R_{> 0}$, and input $x\in \mathbb R^n$. In this paper, only ELMs with RBF nodes are considered. For a network with $\order$ hidden nodes, the output $y \in \mathbb R^m$ of the ELM is:
\begin{equation}
	y = \sum_{i=1}^\order{\beta_i g(a_i,b_i,x)} = \sum_{i=1}^\order{\beta_i g_i(x)},
\end{equation}
with $\beta_i \in \mathbb R^m$ the output weight vector connecting the $i$-th hidden node with the $m$ output nodes. For $N$ inputs $x_j$, this can be written in matrix notation as
\begin{equation}
	Y = \HLOM B,
\end{equation}
where
\begin{equation}\label{eq:HLOM}
	\HLOM = \left[\begin{array}{ccc}
			g_1(x_1) & \ldots & g_\order(x_1)\\
			\vdots & & \vdots \\
			g_1(x_N) & \ldots & g_\order(x_N)
			\end{array}
			\right]_{N\times \order}
\end{equation}
and
\begin{equation}
	Y = \left[\begin{array}{c}
			y_1^T\\
			\vdots\\
			y_N^T
			\end{array}
			\right]_{N\times m},\quad
%\end{equation}
%\begin{equation}
	B = \left[\begin{array}{c}
			\beta_1^T\\
			\vdots\\
			\beta_h^T
			\end{array}
			\right]_{\order\times m}.
\end{equation}

If a similar $N \times m$-matrix $\desiredoutputmatrix$ consisting of target output values $\desiredoutput$ is constructed, the goal is to minimise the sum of square errors:
\begin{eqnarray}\label{eq:error}
	E  =  \sum_{j=1}^N{||y_j - \desiredoutput_j||^2} = ||Y - \desiredoutputmatrix||_F^2 = || \HLOM B - \desiredoutputmatrix||_F^2,
\end{eqnarray}
where $||\cdot||_F$ denotes the Frobenius norm. The output weights $B$ that minimise $E$, can be found by using the Moore-Penrose pseudo-inverse~\cite{rao1971generalized}, here denoted as $\dagger$. This also makes sure that the norm of the output weights are minimised. Then the optimal solution is:
\begin{equation}
	\hat B = \HLOM^\dagger \desiredoutputmatrix.
\end{equation}

\section{Proposed model}
\label{sec:SLM}

The model proposed in this paper is an extension of the ELM model. To define the model, two other definitions are given first:
%\bigskip
\begin{definition}\label{def:LM}
	A \emph{Linear Model} (LM) is a function of the form $LM_i(x) = \alpha_i x + \beta_i$, with $\alpha_i$ an $m\times n$-matrix and $\beta_i \in \mathbb R^m$.
\end{definition}

%\bigskip

\begin{definition}\label{def:LLM}
	A \emph{Local Linear Model} (LLM) is a weighted sum of LMs, where the model weights $p_i$ can depend on the input: $LLM(x) =$  $ \sum_{i=1}^{\order}{p_i(x) LM_i(x)}$ $ = \sum_{i=1}^{\order}{p_i(x)(\alpha_i x + \beta_i)}$.
\end{definition}

Now the proposed model will be defined:
%\bigskip
\begin{definition}\label{def:SLM}
	A \emph{Shuffled Linear Model} (SLM) is a LLM with RBF model weights $p_i(x) = g_i(x) = e^{-b_i||x-a_i||^2}$, where the parameters $a_i \in \mathbb R^n$ and $b_i \in \mathbb R_{> 0}$ are drawn from a continuous probability distribution (independent of the input): $SLM(x) = \sum_{i=1}^{\order}{e^{-b_i||x-a_i||^2}(\alpha_i x + \beta_i)}$.
	% The linear models could also make use of the RBF parameters, giving the alternative form $SLM'(x) = \sum_{i=1}^{\tilde L}{e^{-b_i||x-a_i||^2}(\alpha_i (\sqrt{b_i}(x-a_i)) + \beta_i)} = \sum_{i=1}^{\tilde L}{e^{-||z_i||^2}(\alpha_i z_i + \beta_i)}$, where $z_i = \sqrt{b_i}(x-a_i)$.
\end{definition}

%\bigskip

Note that the SLM is equivalent to the ELM if all $\alpha_i$ are zero. It is therefore expected that, for nonzero $\alpha_i$, the approximation capabilities of the SLM are better than those of the ELM if the number of local models is equal to the number of hidden nodes $\order$%($\ordertwo = \order$)
. This is what will be proved in this paper.

%%%%%%%%%%%%%%%%%%%%%%%%%%%%%%%%%%%%%%%%%%%%%%%%%%%%%%%%%%%%%%%%%%%%%%%%%%%%%%%%%%%%%%%%%%%%%%%%%%%%%%%%%%

\section{Approximation capabilities of SLM and proposed algorithm}\label{sec:UA}
In this section it will be proved that a SLM, as an extension of an ELM, can act as a universal approximator, and an upper bound on the number of models will be given. First, it will be proved that the SLM can act as a universal approximator. After that, several lemmas will be proved that lead to the main theorem of this paper, where Lemma 2 is used in the proof of Lemma 3, and Lemmas 1 and 3 are used in the proof of the main theorem.  This section concludes with a learning algorithm for the SLM.

\subsection{Universal approximation with a SLM}

The result of the first theorem follows directly from the approximation capabilities of the ELM:

\begin{thm}
	SLMs are universal approximators. That is, given a continuous target function $f:\mathbb R^m \rightarrow \mathbb R^n$ and a SLM with free parameters $\alpha_i$ and $\beta_i$, $i = 1\ldots\order$, then with probability one the parameters $\alpha_i$ and $\beta_i$ can be chosen in such a way that $\lim_{\order\rightarrow \infty} \int_{\mathbb R^n}{||SLM(x) - f(x)||^2 dx} = 0$.
\end{thm}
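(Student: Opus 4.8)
The plan is to obtain the statement directly from the known universal approximation property of ELMs with random RBF nodes. The key observation is structural: the SLM of Definition~\ref{def:SLM} contains the RBF-node ELM of Section~\ref{sec:ELM} as the special case in which every $\alpha_i$ vanishes. Indeed, setting $\alpha_i = 0$ for all $i$ collapses each term $e^{-b_i\|x-a_i\|^2}(\alpha_i x + \beta_i)$ to $\beta_i\, g_i(x)$, so that $SLM(x) = \sum_{i=1}^{\order} \beta_i g_i(x)$ is precisely an ELM with $\order$ RBF hidden nodes and randomly drawn parameters $a_i, b_i$. Since the $\beta_i$ (and $\alpha_i$) are exactly the free parameters of the SLM, any approximation achievable by such an ELM is achievable by the SLM.

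Concretely I would proceed as follows. First, invoke the universal approximation theorem for ELMs with randomly generated hidden nodes (Huang et al.~\cite{huang2006universal,huang2007convex}): since the RBF node $g(a,b,x) = e^{-b\|x-a\|^2}$ is a non-constant, bounded, integrable activation and the pairs $(a_i,b_i)$ are sampled from a continuous distribution, for any continuous target $f$ one has, with probability one over that sampling, a choice of output weights $\beta_i$ for which the resulting networks converge to $f$ in the relevant $L^2$ sense as $\order\to\infty$. Second, transfer this to the SLM by taking $\alpha_i = 0$ for every $i$ and choosing the $\beta_i$ as supplied by the ELM theorem; this is a legitimate choice of the SLM's free parameters and yields $\lim_{\order\to\infty}\int \|SLM(x) - f(x)\|^2\,dx = 0$ with probability one. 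The randomness in the $a_i,b_i$ is absorbed entirely into the ``probability one'' clause already present in the ELM statement, so nothing extra is needed there.

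The only delicate points — and the closest thing to an obstacle — are bookkeeping ones rather than conceptual ones: aligning the hypotheses of the cited ELM result with the present statement (matching the input/output dimensions, which the theorem writes as $f:\mathbb R^m\to\mathbb R^n$, and treating the vector-valued output coordinatewise so the scalar ELM result applies componentwise), and checking that the mode of convergence used in \cite{huang2006universal,huang2007convex} — typically $L^2$ on a compact domain or with respect to a finite sampling measure — is compatible with the integral over $\mathbb R^n$ appearing here. One would handle this either by restricting to a compact region or a finite measure, as is standard in this literature, or by noting that the Gaussian factors keep every term integrable on $\mathbb R^n$. No new idea beyond the reduction $\alpha_i \equiv 0$ is required; the substance is inherited wholesale from the ELM theorem, which is why the result is stated as an immediate corollary.
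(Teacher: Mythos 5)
Your proposal is correct and matches the paper's own argument exactly: set $\alpha_i = 0$ to reduce the SLM to an RBF-node ELM and invoke the ELM universal approximation theorem of Huang et al.~\cite{huang2007convex}. The additional bookkeeping remarks about dimensions and the mode of convergence are sensible but go beyond what the paper itself records.
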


\begin{proof}
	Choosing $\alpha_i = 0$, $i=1\ldots\order$, the SLM is equivalent to an ELM. For an ELM, the parameters $\beta_i$ can be determined using linear regression to guarantee universal approximation~\cite[Thm.~II.1]{huang2007convex}.
\end{proof}

The interesting part is, of course, whether the extension from ELM to SLM gives any improvements. It turns out that a SLM can also be trained using linear regression with all free parameters, just like an ELM. But for the same approximation accuracy, it needs less local models than the number of hidden nodes in the ELM. This is simply due to the fact that a SLM has more free parameters. The remainder of this section is devoted to the proof of these results, after which an algorithm for training the SLM will be presented.

\subsection{Necessary lemmas}
Before presenting the next theorem, some lemmas will be proved. Most proofs are similar to those of the ELM theorems~\cite{huang2006extreme,huang2007convex}.

\begin{lemma}\label{lem:linreg}
	Given a SLM with inputs $x_j$ and desired outputs $\desiredoutput_j$, $j=1\ldots N$, the error $E = \sum_{j=1}^N{||SLM(x_j) - \desiredoutput_j||^2}$ can be minimised using linear regression.
\end{lemma}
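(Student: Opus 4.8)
The plan is to show that the SLM output, as a function of the free parameters $\alpha_i$ and $\beta_i$, is linear, so that minimizing the sum of squared errors reduces to an ordinary linear least-squares problem solvable via the Moore--Penrose pseudo-inverse, exactly as in the ELM case. First I would fix the randomly drawn parameters $a_i \in \mathbb{R}^n$ and $b_i \in \mathbb{R}_{>0}$, so that each $g_i(x) = e^{-b_i\|x-a_i\|^2}$ is a fixed scalar function of the input. For a single input $x_j$, the SLM output is $SLM(x_j) = \sum_{i=1}^{\order} g_i(x_j)(\alpha_i x_j + \beta_i)$, which is manifestly linear in the entries of the matrices $\alpha_i$ and the vectors $\beta_i$.

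The key step is to rewrite this in matrix form analogous to $Y = \HLOM B$. I would observe that each term $g_i(x_j)\alpha_i x_j$ can be written as $\alpha_i (g_i(x_j) x_j)$, and each term $g_i(x_j)\beta_i$ as $\beta_i g_i(x_j)$. So if I define, for each input $x_j$, an augmented feature vector that stacks the scaled inputs $g_i(x_j)x_j \in \mathbb{R}^n$ for $i = 1\ldots\order$ together with the scalars $g_i(x_j) \in \mathbb{R}$ for $i=1\ldots\order$ — a vector in $\mathbb{R}^{\order(n+1)}$ — then $SLM(x_j)$ is a linear map of this feature vector with coefficient matrix built from the $\alpha_i$ and $\beta_i$. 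Collecting all $N$ inputs into rows gives a design matrix, call it $\HLOM_{\mathrm{SLM}}$, of size $N \times \order(n+1)$, and an unknown coefficient matrix $B_{\mathrm{SLM}}$ of size $\order(n+1) \times m$ whose blocks are the transposes of $\alpha_i$ and $\beta_i$, so that $Y = \HLOM_{\mathrm{SLM}} B_{\mathrm{SLM}}$.

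Then the error is $E = \|\HLOM_{\mathrm{SLM}} B_{\mathrm{SLM}} - \desiredoutputmatrix\|_F^2$, which is exactly the form of \eqref{eq:error}, and the minimum-norm minimizer is $\hat B_{\mathrm{SLM}} = \HLOM_{\mathrm{SLM}}^\dagger \desiredoutputmatrix$, establishing that linear regression suffices. I expect the main obstacle to be purely bookkeeping: setting up the block structure of the augmented design matrix and coefficient matrix carefully so that the matrix product genuinely reproduces the sum $\sum_i g_i(x_j)(\alpha_i x_j + \beta_i)$ row by row, and confirming the dimensions match ($m$, $n$, $\order$, $N$ all appear). There is no analytic difficulty — the whole content is the observation that the RBF weights multiply, rather than compose with, the linear models, so linearity in the trainable parameters is preserved.
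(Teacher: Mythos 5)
Your proposal is correct and is essentially the paper's own argument: the paper likewise augments the input to $z_j = [x_j^T\ 1]^T$, stacks the blocks $g_i(x_j)z_j^T$ into an $N\times(n+1)\order$ design matrix $K$, collects the $\alpha_i,\beta_i$ into a coefficient matrix $\Gamma$, and minimises $\|K\Gamma-\desiredoutputmatrix\|_F^2$ via $\hat\Gamma=K^\dagger\desiredoutputmatrix$. Your feature vector differs only by a column permutation, which is immaterial.
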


\begin{proof}
	Let $\gamma_i = [\alpha_i\ \beta_i]$ be the $m\times (n+1)$-matrix of the free parameters for model $i$, and let $\Gamma = [\gamma_1\  \ldots\  \gamma_{\order}]^T$ be the $(n+1)\order\times m$-matrix of all free parameters. Let also $z_j = \left[\begin{array}{c}x_j\\ 1\end{array}\right]$, then the output of the SLM can be written as:
	\begin{equation}
		SLM(x) = \sum_{i=1}^{\order}{g_i(x)\gamma_i z}.
	\end{equation}

	Now the error $E$ can be rewritten as follows:
	\begin{eqnarray}
		E & = &  \sum_{j=1}^N{||SLM(x_j) - \desiredoutput_j||^2}\nonumber\\
		& = &  \sum_{j=1}^N{||\sum_{i=1}^{\order}{g_i(x_j)\gamma_i z_j} - \desiredoutput_j||^2}\nonumber\\
		& = &  \sum_{j=1}^N{||\Gamma^T \left[\begin{array}{c}
																				g_1(x_j) z_j\\
																				\vdots\\
																				g_{\order}(x_j) z_j
																				\end{array}\right] 		- \desiredoutput_j||^2}\\%\nonumber\\
		& = & ||\Gamma^T K^T - {\desiredoutputmatrix}^T||_F^2,\nonumber\\
		& = & ||K \Gamma - \desiredoutputmatrix||_F^2,\nonumber
	\end{eqnarray}
	with
	\begin{equation}
		\desiredoutputmatrix = \left[\begin{array}{c} {\desiredoutput_1}^T\\ \vdots\\ {\desiredoutput_N}^T \end{array}\right]_{N\times m},
	\end{equation}
	and
	\begin{equation}\label{eq:matrixK}
		K = \left[\begin{array}{ccc}
							 g_1(x_1) z_1^T & \ldots & g_{\order}(x_1) z_1^T\\
							 \vdots & & \vdots\\
							 g_1(x_N) z_N^T & \ldots & g_{\order}(x_N) z_N^T\\
							 \end{array}\right]_{N\times (n+1)\order}.
	\end{equation}
	
	This sum of squared errors can be minimised using linear regression, for example by using the minimum norm least-square solution
	\begin{equation}
		\hat \Gamma = K^\dagger \desiredoutputmatrix.
	\end{equation}
	This completes the proof.
	
\end{proof}

\begin{lemma}\label{lem:distinctnorms}
	Given $N$ distinct samples $(x_j, \desiredoutput_j) \in \mathbb R^n \times \mathbb R^m$, if $a \in \mathbb R^n$ is chosen randomly from a continuous probability distribution, then with probability one we have $||x_j - a|| \neq ||x_{j'} - a||$, for $j\neq j'$, $j,j'=1 \ldots N$.
\end{lemma}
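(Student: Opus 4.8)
The plan is to reduce the statement to the elementary fact that a hyperplane in $\mathbb R^n$ has Lebesgue measure zero, hence probability zero under any continuous distribution. First I would fix a single pair of indices $j \neq j'$ (with $x_j \neq x_{j'}$, which I take to be part of the hypothesis) and examine the ``bad set''
\[
	B_{j,j'} = \{\, a \in \mathbb R^n : \|x_j - a\| = \|x_{j'} - a\| \,\}.
\]
Squaring both sides and expanding, the quadratic terms $\|a\|^2$ cancel, leaving the single linear equation $2 a^T (x_{j'} - x_j) = \|x_{j'}\|^2 - \|x_j\|^2$. Since $x_j \neq x_{j'}$, the coefficient vector $x_{j'} - x_j$ is nonzero, so $B_{j,j'}$ is an affine hyperplane --- the perpendicular bisector of the segment joining $x_j$ and $x_{j'}$ --- and in particular a proper affine subspace of $\mathbb R^n$.

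Next I would invoke that such a hyperplane has Lebesgue measure zero in $\mathbb R^n$; since $a$ is drawn from a continuous probability distribution (i.e.\ one absolutely continuous with respect to Lebesgue measure), it follows that $\Pr(a \in B_{j,j'}) = 0$. The event that some pair of norms coincides is $\bigcup_{1 \le j < j' \le N} B_{j,j'}$, a finite union of probability-zero events, hence itself of probability zero. Taking complements, with probability one $a$ lies in none of the $B_{j,j'}$, which is exactly the claim that $\|x_j - a\| \neq \|x_{j'} - a\|$ for all $j \neq j'$.

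I expect no serious obstacle here; the argument is a routine ``generic position'' statement. The only points that warrant a word of care are the interpretation of ``continuous probability distribution'' (so that measure-zero sets are indeed null events under it) and the tacit assumption that the input vectors $x_j$ are pairwise distinct: if $x_j = x_{j'}$ for some pair while only the augmented samples $(x_j,\desiredoutput_j)$ differ, then the two norms are trivially equal for every $a$ and the conclusion fails, so distinctness of the $x_j$ themselves --- not merely of the pairs --- must be used.
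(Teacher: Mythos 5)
Your proof is correct and takes essentially the same route as the paper's: both identify the bad set for each pair $j \neq j'$ as the perpendicular-bisector hyperplane of $x_j$ and $x_{j'}$ and conclude via a finite union of Lebesgue-null sets having probability zero under a continuous distribution. Your closing caveat is a worthwhile refinement the paper glosses over: the hypothesis only asserts distinctness of the pairs $(x_j,\desiredoutput_j)$, whereas the lemma genuinely requires the inputs $x_j$ themselves to be pairwise distinct (otherwise the bad set is all of $\mathbb R^n$ rather than a hyperplane), an assumption the paper's proof uses tacitly.
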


\begin{proof}
	The proof is an adaptation to one of the proofs of Huang and Babri~\cite{huang1998upper}. Consider the set $V(x_j, x_{j'}) = \{a\in \mathbb R^n: ||x_j - a|| = ||x_{j'} - a||\}$. This set is a hyperplane in $\mathbb R^n$. Since this is an $(n-1)$-dimensional surface in $\mathbb R^n$, and there are only finite $j,j'$, the union $U = \bigcup_{j\neq j'}{V(x_j, x_{j'})}$ is a finite union of $(n-1)$-dimensional surfaces for $j\neq j'$, $j,j'=1 \ldots N$. So, for any probability density function $f$, we have $P(a\in U) = \int_U{f(a) da} = 0$.
	Therefore, the probability that $||x_j - a|| = ||x_{j'} - a||$ is $0$ for randomly chosen $a$, and the result of the lemma follows.
\end{proof}

\begin{lemma}\label{lem:fullrank}
	For a SLM
	% with $\ordertwo$ local models
	, matrix $K$ from equation (\ref{eq:matrixK}) has full rank with probability one if the matrix $Z = [z_1\ \ldots \ z_N]^T$ has full rank.% and $\ordertwo(n+1)\geq N$.
	%if there is no $k\in \{1,\ldots, n/}$ such that $[x_{1_k} \ \ldots \ x_{N_k}]^T = [1 \ \ldots \ 1]^T$. %outdated
\end{lemma}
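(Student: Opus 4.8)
The plan is to adapt the rank arguments used for ELM hidden-layer matrices to the block structure of $K$. Write $K = [\,D_1 Z \mid \cdots \mid D_{\order}Z\,]$, where $Z = [z_1\ \cdots\ z_N]^T$ and $D_i = \mathrm{diag}(g_i(x_1),\ldots,g_i(x_N))$ is $N\times N$ diagonal with strictly positive entries. Each entry of $K$ is $e^{-b_i\|x_j-a_i\|^2}(z_j)_k$, a real-analytic function of the parameters $\theta = (a_1,b_1,\ldots,a_{\order},b_{\order})$, which range over the connected open set $\Theta = (\R^n\times\R_{>0})^{\order}$. Put $\rho = \min\{N,(n+1)\order\}$, the largest rank $K$ can have. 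Since $\mathrm{rank}\,K < \rho$ iff every $\rho\times\rho$ minor of $K$ vanishes, and each such minor is real-analytic on $\Theta$, each minor either vanishes identically on $\Theta$ or has a zero set of Lebesgue measure zero. Hence it is enough to produce a single $\theta\in\Theta$ with $\mathrm{rank}\,K=\rho$: then some minor is not identically zero, its zero set is null, and — because the parameters have a density, exactly as in the proof of Lemma~\ref{lem:distinctnorms} — the parameters with $\mathrm{rank}\,K<\rho$ form a probability-zero set. So the lemma reduces to exhibiting one good parameter choice.

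Two sub-cases of this reduced problem are easy. If $N\le n+1$, then $Z$ has full row rank $N$, so already the first $n+1$ columns of $K$, which form $D_1 Z$, have rank $N=\rho$, for every parameter value. If $\order\ge N$, choose by Lemma~\ref{lem:distinctnorms} a common centre $a_1=\cdots=a_{\order}=a$ with the norms $\|x_j-a\|$ pairwise distinct, and pairwise distinct $b_i>0$; then the $\order$ columns of $K$ associated with the constant coordinate of the $z_j$ form the $N\times\order$ matrix with $(j,i)$-entry $\mu_j^{b_i}$, where $\mu_j=e^{-\|x_j-a\|^2}$ are distinct, a generalized Vandermonde matrix and hence of rank $N=\rho$. (This submatrix is the ELM matrix $\HLOM$ of (\ref{eq:HLOM}), so this case also follows from the ELM full-rank theory.)

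For the remaining regime ($n+1<N$ and $\order<N$) I would build $\theta$ one block at a time, paralleling the column-by-column argument used for ELMs. Assume blocks $1,\ldots,g$ have been fixed so that $U:=\mathrm{colspace}[\,D_1Z\mid\cdots\mid D_gZ\,]$ has dimension $\min\{N,g(n+1)\}$; it suffices to choose $(a_{g+1},b_{g+1})$ so that $\dim(U+D_{g+1}W)=\min\{N,(g+1)(n+1)\}$, where $W=\mathrm{colspace}\,Z$, i.e. so that the $(n+1)$-dimensional space $D_{g+1}W$ meets $U$ in the least dimension allowed by $\dim U$ and $N$. Since $\dim(U+D_{g+1}W)$ equals $\dim U$ plus the rank of $\Pi_U D_{g+1}Z$ (with $\Pi_U:\R^N\to\R^N/U$ the quotient map), and this rank is again controlled by real-analytic minors in $(a_{g+1},b_{g+1})$, one only needs one RBF block ``in general position'' relative to $U$. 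I would try to obtain it by a degeneration: take $a_{g+1}$ with the $\|x_j-a_{g+1}\|$ distinct and let $b_{g+1}\to 0$, or let the centre recede to infinity along a generic direction so that the minors of $\Pi_U D_{g+1}Z$ become governed by a tropical assignment problem in the exponents; in either limit the maximal rank should be readable off using only that the $\mu_j$ are distinct and that $Z$ has full column rank.

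The genuine obstacle is exactly this last step: verifying that the family $\{e^{-b\|x-a\|^2}\}$ is rich enough to place $D_{g+1}W$ in general position with respect to an arbitrary subspace $U$ — equivalently, that the relevant minor of $\Pi_U D_{g+1}Z$ is not identically zero in $(a_{g+1},b_{g+1})$. For ELMs the corresponding richness statement is the linear independence of the shifted activations $b\mapsto g(\cdot+b)$; here one needs the analogue for Gaussian bumps, and carrying the degeneration through rigorously — in particular, selecting which columns of $K$ survive the limit with a nonzero leading coefficient, allowing for zero entries in $Z$ — is the part that requires real work.
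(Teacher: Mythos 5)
Your opening reduction is sound: every entry of $K$ is a real-analytic function of the parameters on the connected open set $(\R^n\times\R_{>0})^{\order}$, so each $\rho\times\rho$ minor (with $\rho=\min\{N,(n+1)\order\}$) either vanishes identically or on a Lebesgue-null set, and it therefore suffices to exhibit a single parameter choice at which $K$ attains rank $\rho$. Your two easy regimes are also fine ($N\le n+1$ because $D_1$ is invertible and $Z$ has full row rank; $\order\ge N$ via the generalized Vandermonde matrix $(\mu_j^{b_i})$ with distinct $\mu_j$ and distinct $b_i$). The problem is that the regime you leave open, $n+1<N$ and $\order<N$, is precisely the one the lemma is invoked for: Theorem~\ref{thm:main} uses $K$ square, i.e.\ $(n+1)\order=N$, so $\order<N$ and $n+1<N$ whenever $n\ge 1$ and $\order\ge 2$. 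In that regime the inductive step you describe --- that a single Gaussian block $D_{g+1}W$ can be put in general position relative to the span $U$ of the preceding blocks, equivalently that the relevant minor of $\Pi_U D_{g+1}Z$ is not identically zero in $(a_{g+1},b_{g+1})$ --- is the entire analytic content of the lemma, and neither the $b_{g+1}\to 0$ degeneration nor the ``centre to infinity'' limit is carried out. As you acknowledge, this is the part that requires real work, so what you have is a correct reduction plus a plan, not a proof.

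For comparison, the paper takes a different route to the same missing step: it uses the Tamura--Tateishi derivative argument (together with Lemma~\ref{lem:distinctnorms}, which guarantees the $\|x_j-a_i\|$ are distinct so that the derivative relations really give infinitely many independent linear constraints on $w$) to show that the curve $b\mapsto(g(a_i,b,x_1),\dots,g(a_i,b,x_N))$ lies in no proper subspace of $\R^N$, hence the ELM matrix $\HLOM$ of equation~(\ref{eq:HLOM}) can be made full rank; it then forms $M=\HLOM\otimes Z$ of rank $\min(\order,N)\min(n+1,N)$ and notes that $K$ consists of $N$ selected rows of $M$. Note, however, that the paper's final assertion --- that keeping exactly the rows indexed $(j-1)(N+1)+1$ preserves the rank $\min(N,(n+1)\order)$ --- is itself stated without justification (a row-submatrix of a rank-$r$ matrix need not have rank $\min(N,r)$), so the transversality fact you flagged as the obstacle is the very same point at which the paper's argument is thin. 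If you pursue your version, the concrete statement to establish is that the determinant of the $N\times N$ matrix $[D_1Z\mid\cdots\mid D_{\order}Z]$ (in the square case $(n+1)\order=N$) is not the zero function of $(a_1,b_1,\dots,a_{\order},b_{\order})$; until one good configuration is produced, the proof is incomplete.
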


\begin{proof}
	For $i = 1\ldots \order$, and $k = 1\ldots n+1$, %\  and $l=i\cdot k$, the $l$-th column
	the $(i\times k)$-th column of $K$ is given by $[g_i(x_1) z_{1_k} \ \ldots\ g_i(x_N) z_{N_k}]^T$, where $g_i(x) = g(a_i,b_i,x) = e^{-b_i||x-a_i||^2}$.
	Let $I=(u,v)$ be any interval from $\R$, with $u<v$. Let $c:I\rightarrow \R^N$ be the curve defined as
	\begin{equation}
		c(b_i) = [g_i(x_1) \ \ldots \ g_i(x_N)]^T,
	\end{equation}
	seen as a function of $b_i$, for $b_i \in I$.
	%	 For fixed $i,k,l$, consider this vector as a function of $b_i$, and let $I=(p,q)$ be any interval from $\R$, with $p<q$. Then for $b_i\in I$, the following is a curve in $\R^N$:
%	\begin{equation}
%		c_k(b_i) = [g_i(x_1,b_i)z_{1_k} \ \ldots\ g_i(x_N,b_i) z_{N_k}]^T,
%	\end{equation}
%	where the functional dependence on parameter $b_i$ has been added for clarity. 
	Using a similar proof as that of Tamura and Tateishi~\cite{tamura1997capabilities}, it can be proved by contradiction that $c(b_i)$ does not belong to a subspace with dimension less than $N$. 
	
	Suppose it does, then there exists a vector $w\neq 0$ orthogonal to this subspace:
	\begin{equation}
		w^T(c(b_i) - c(u)) = \sum_{j=1}^N{w_j g(a_i,b_i,x_j)} - w^T c(u) = 0.
	\end{equation}
	Without loss of generality, assume $w_N \neq 0$, then
	\begin{equation}
		g(a_i,b_i,x_N) = \frac 1{w_N} w^T c(u) - \sum_{j=1}^{N-1}{\frac{w_j}{w_N} g(a_i,b_i,x_j)}.
	\end{equation}
	 
	 On both sides of this equation are functions of $b_i$. Taking the $s$-th partial derivative to $b_i$ on both sides gives the following equation:
	 \begin{equation}\label{eq:inf}
	 	\frac{\partial^s}{\partial b_i^s} g(a_i,b_i,x_N) = -\sum_{j=1}^{N-1}{\frac{w_j}{w_N} \frac{\partial^s}{\partial b_i^s}g(a_i,b_i,x_j)}, \quad s=1, 2 ,\ldots.
	 \end{equation}
	 Now, $g$ is infinitely differentiable w.r.t. $b_i$, with $\frac{\partial^s}{\partial b_i^s} g(a_i,b_i,x_j) = (-||x_j-a_i||^2)^s e^{-b_i||x_j-a_i||^2}$. Using this fact and the result of Lemma
	 \ref{lem:distinctnorms}, it follows that with probability one, Equation (\ref{eq:inf}) actually contains an infinite number of equations that are linear in parameters $w_j$, for $s = 1,\ 2,\  \ldots$. The number of free parameters $w_j$, however, is $N$.
	 This gives a contradiction. Hence, $c(b_i)$ does not belong to a subspace with dimension less than $N$.
	 
	 Since the above contradiction holds for $b_i$ from any interval $I\subseteq \R$, it is possible to choose $\order$ values $b_1, b_2, \ldots, b_{\order}$ from any continuous probability distribution over $\R$, such that the following matrix $\HLOM$ has full rank:
	 \begin{equation}
	 	\HLOM = [c(b_1) \ c(b_2)\ \ldots\ c(b_{\order})].
	 \end{equation}
	 This is the same $N\times \order$ matrix as the hidden layer output matrix from equation \ref{eq:HLOM}. The matrix $Z = [z_1\ \ldots \ z_N]^T$ is an $N\times (n+1)$-matrix. Let $M = \HLOM \otimes Z$ be the Kronecker product of these matrices. Then $M$ is an $N^2\times (n+1)\order$-matrix. Since both $\HLOM$ and $Z$ have full rank, and for matrices the general rule $\mathrm{rank}(A\otimes B) = \mathrm{rank}(A)\mathrm{rank}(B)$ holds, the matrix $M$ has rank $r = \min(\order,N)\min(n+1,N)$.
	 
	 Now, matrix $K$ appears actually inside matrix $M$. For matrix $M$, we have:
	 \begin{equation}
	 	M = \HLOM \otimes Z = \left[\begin{array}{ccc}
	 																		g_1(x_1)z_1^T & \ldots & g_{\order}(x_1)z_1^T\\
	 																		\vdots && \vdots\\
	 																		g_1(x_1) z_N^T & \ldots & g_{\order}(x_1)z_N^T\\
	 																		\vdots && \vdots\\
	 																		g_1(x_N)z_1^T & \ldots & g_{\order}(x_N)z_1^T\\
	 																		\vdots && \vdots\\
	 																		g_1(x_N) z_N^T & \ldots & g_{\order}(x_N)z_N^T
	 																		\end{array}\right].
		\end{equation}
		The $N\times (n+1)\order$-matrix $K$ from equation (\ref{eq:matrixK}) follows from this matrix by removing rows until only the rows with indices $(j-1)(N+1)+1$ are left, for $j=1,\ldots, N$. It follows that the rank of matrix $K$ is equal to $\min(N,r) = \min(N,(n+1)\order)$, so matrix $K$ has full rank.
\end{proof}

\subsection{Improvement of the SLM as an extension of an ELM}

Using the Lemmas that appeared in this section, the main theorem of this paper, which is similar to that of Huang~\cite[Thm. 2.2]{huang2006extreme} can be presented. From the proofs of the Lemmas, the following definitions are needed:
\begin{eqnarray}
	z_j & = & \left[\begin{array}{c}x_j\\ 1\end{array}\right],\\
	Z  & = & [z_1\ \ldots \ z_N]^T.
\end{eqnarray}

\begin{thm}\label{thm:main}
	Given any $\epsilon >0$, and given $N$ distinct samples $(x_j,\desiredoutput_j) \in \R^n \times \R^m$, if matrix $Z$ has full rank, then there exists $\order \leq N/(n+1)$ such that a SLM with $\order$ local models can be trained using linear regression to get $E = \sum_{j=1}^N{||SLM(x_j) - \desiredoutput_j||^2}\  <\  \epsilon$ with probability one.
\end{thm}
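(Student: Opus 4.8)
The plan is to choose the number of local models $\order$ just large enough that the regression matrix $K$ of Lemma~\ref{lem:linreg} becomes a wide matrix of full row rank; linear regression then interpolates the data exactly, so $E = 0 < \epsilon$. Concretely, I would take $\order$ to be the least positive integer with $(n+1)\order \ge N$, i.e.\ $\order = \lceil N/(n+1)\rceil$ --- equal to $N/(n+1)$ precisely when $n+1$ divides $N$, and in general the smallest integer dominating it, which is the sense in which the stated bound is to be read (the corresponding ELM result instead needs $N$ hidden nodes, so this is the claimed $(n+1)$-fold saving).

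With this $\order$, I would draw the SLM parameters $a_i \in \R^n$ and $b_i \in \R_{>0}$ from continuous distributions as required by Definition~\ref{def:SLM}, and form the $N \times (n+1)\order$ matrix $K$ of equation~(\ref{eq:matrixK}). By Lemma~\ref{lem:linreg}, minimising $E$ over the free parameter block $\Gamma$ is exactly the least-squares problem $\min_\Gamma ||K\Gamma - \desiredoutputmatrix||_F^2$, with minimiser $\hat\Gamma = K^\dagger \desiredoutputmatrix$. Since $Z$ has full rank by hypothesis, Lemma~\ref{lem:fullrank} gives $\mathrm{rank}(K) = \min(N,(n+1)\order)$ with probability one, and by the choice of $\order$ this equals $N$; hence $K$ has full row rank with probability one. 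For a wide matrix of full row rank one has $KK^\dagger = I_N$ (equivalently, the map $\Gamma \mapsto K\Gamma$ is onto $\R^{N\times m}$ column by column), so $K\hat\Gamma = KK^\dagger \desiredoutputmatrix = \desiredoutputmatrix$ and therefore $E = ||K\hat\Gamma - \desiredoutputmatrix||_F^2 = 0 < \epsilon$, which proves the theorem.

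The analytic substance has already been spent in Lemma~\ref{lem:fullrank} (non-degeneracy of the Gaussian-curve determinant, via Lemma~\ref{lem:distinctnorms} and the Tamura--Tateishi-style differentiation argument), so I do not expect a genuine obstacle in this concluding step. The only points needing care are the integrality of $N/(n+1)$ --- one genuinely uses $\order = \lceil N/(n+1)\rceil$ local models to force $\mathrm{rank}(K) = N$ --- and the elementary linear-algebra fact that full row rank of $K$ makes $K\Gamma = \desiredoutputmatrix$ exactly solvable, i.e.\ the identity $KK^\dagger = I$ for such $K$.
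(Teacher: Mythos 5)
Your proof is correct and follows essentially the same route as the paper: choose $\order$ so that $(n+1)\order \ge N$, invoke Lemma~\ref{lem:fullrank} (under the full-rank hypothesis on $Z$) to get $\mathrm{rank}(K)=N$ with probability one, and conclude that the least-squares solution from Lemma~\ref{lem:linreg} interpolates the data exactly, giving $E=0<\epsilon$. If anything you are more careful than the paper's own argument, which takes $K$ to be exactly square and invertible (and contains a typo, writing $\order=(n+1)N$ where $\order=N/(n+1)$ is meant), whereas you address the integrality of $N/(n+1)$ via the ceiling and handle the possibly non-square case through the identity $KK^\dagger=I_N$ for full-row-rank $K$.
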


\begin{proof}
	The error $E$ can actually become zero by choosing $\order = (n+1)N$. Since $Z$ has full rank, from Lemma \ref{lem:fullrank} it follows that matrix $K$ also has full rank, with probability one. Then $K$ is also invertible, since the size of $K$ is $N\times (n+1)\order = N\times N$. The solution showed at the end of the proof of Lemma \ref{lem:linreg} is now actually equal to $\hat \Gamma = K^{-1} \desiredoutputmatrix$, and the error $E = ||K\Gamma - \desiredoutputmatrix||_F^2$ is zero.
	
	Since zero error can be achieved by choosing $\order = (n+1)N$, there always exists a $\order \leq (n+1)N$ to let the error be as small as desired.
\end{proof}

Note that the condition that $Z$ has full rank can be easily satisfied, for example by adding noise to the training samples. Theorem \ref{thm:main} shows that the SLM needs less local models than the number of hidden nodes in an ELM. This does not necessarily imply a decrease in computation time, since the matrix of which a pseudo-inverse needs to be computed, has size $N\times (n+1)\order$, not size $N\times \order$. An ELM with $\order$ hidden nodes and a SLM with $\order/(n+1)$ local models are therefore comparable in both approximation capabilities and in computational efficiency. The construction of the matrix for which a pseudo-inverse needs to be computed, however, can be done more efficiently due to the block structure of $K$, so there is some gain in computational efficiency for a large number of models or hidden neurons. There might also exist a more efficient way to calculate the pseudo-inverse of this block matrix.

It is questionable whether it is desired to use less nodes or models in an approach that uses random nodes and models, 
%After all, randomly generating hidden layer parameters of random projection methods like ELM and RC, and also SLM, can be seen as taking Monte Carlo samples from a continuous distribution to approximate an infinite-dimensional feature space~\cite{hermans2012recurrent}, and in this case, having more samples is an advantage.
since the model performance should not depend highly on parameters that are chosen randomly.

Still, there are several advantages of the SLM, compared to an ELM. As mentioned above, the matrix $K$ can be constructed efficiently because it contains a block structure, and maybe in the future an efficient method to calculate the pseudo-inverse of this matrix could be found. Besides this, the SLM is not as black-box as an ELM because the output matrix $\Gamma$ actually shows the direct (linear) relations between input and output near several operating points of the input space, whereas the output weights of an ELM only show relations between the less interpretable hidden layer and the outputs. Finally, the SLM allows for a clear physical interpretation due to its similarities with local linear models from the area of system identification, and its similarities with fuzzy inference models; see Sections \ref{sec:TSM} and \ref{sec:LLMint}.
% These advantages are also important to consider when comparing the SLM with RC methods like echo state networks.

\subsection{Proposed algorithm}

The proposed SLM learning algorithm is shown in Figure~\ref{fig:alg}. 
After running the algorithm, the output of the SLM for input $x$ is given by 
$SLM(x) = \Gamma^T \left[\begin{array}{c}
																				g_1(x) z\\
																				\vdots\\
																				g_{\ordertwo}(x) z
																				\end{array}\right],$ with $z =  \left[\begin{array}{c}x\\ 1\end{array}\right]$.\\

\begin{figure}[hbt]
\begin{center}
\fbox{\parbox{0.9\textwidth}{\begin{center}\textbf{SLM Algorithm}\end{center}\begin{enumerate}
	\item Given $N$ input-output pairs $(x_j,\desiredoutput_j)\in \R^n \times \R^m$, randomly generate parameters $a_i \in \mathbb R^n$ and $b_i \in \mathbb R_{> 0}$ from a continuous probability distribution independent of the input.
	\item Choose the number of models $\order$ and let $g_i(x) = e^{-b_i||x-a_i||^2}$ for $i=1,\ldots,\order$.
	\item For $z_j = \left[\begin{array}{c}x_j\\ 1\end{array}\right]$, calculate the $N\times (n+1)\order$-matrix\\ $K = \left[\begin{array}{ccc}
							 g_1(x_1) z_1^T & \ldots & g_{\order}(x_1) z_1^T\\
							 \vdots & & \vdots\\
							 g_1(x_N) z_N^T & \ldots & g_{\order}(x_N) z_N^T\\
							 \end{array}\right]$.
	\item Put the desired outputs $\desiredoutput_j$ in matrix $\desiredoutputmatrix = [\desiredoutput_1\ \ldots\ \desiredoutput_N]^T$ and calculate $\Gamma = K^\dagger \desiredoutputmatrix$ by using the Moore-Penrose pseudo-inverse.
\end{enumerate}}
}
\end{center}
\caption{Proposed SLM algorithm.}
\label{fig:alg}
\end{figure}

%%%%%%%%%%%%%%%%%%%%%%%%%%%%%%%%%%%%%%%%%%%%%%%%%%%%%%%%%%%%%%%%%%%%%%%%%%%%%%%%%%%%%%%%%%%%%%%%%%%%%%%%%%%%%%%%%%%%%%%%%%%%%%%%%%%%%%%%%%%%

\section{Interpretation of the model}\label{sec:int}

Several interpretations of the SLM are possible. These interpretations will be described in the following subsections.

\subsection{ELM interpretation}\label{sec:elmint}

The main interpretation of the SLM in this paper is the extension of an ELM with RBF nodes. The output weights $\beta_i$ of an ELM can be seen as constant functions, since they do not depend directly on the input. After training an ELM, the output weights are the same for every possible input of the network. In the SLM extension, the output weights are not constants, but functions of the input. Instead of multiplying the outputs of the hidden layer $g_i(x)$ with constant output weights $\beta_i$, they are multiplied with a linear function of the input: $\alpha_i x + \beta_i$. This also changes the interpretation of output weight to linear model, and from hidden layer output to model weight.

Since only the parameters in the hidden layer are fixed, a SLM has more free parameters than an ELM. The vector $\beta_i$ is of size $m$ and the matrix $\alpha_i$ is of size $m\times n$. If $\order_1$ is the number of local models in a SLM and $\order_2$ the number of hidden neurons in an ELM, this implies that the SLM has $\order_1 \cdot m \cdot (n+1)$ free parameters, while an ELM has only $\order_2\cdot m$ free parameters. A logical conclusion is that an ELM needs $n+1$ times as many neurons as the number of local models in a SLM, for similar approximation capabilities. % This will be proved in section \ref{sec:UA}.
This is true under the conditions shown in Theorem \ref{thm:main}.

\subsection{RBF interpretation}

A SLM can be interpreted as a RBF network, but there are some important differences. In the first place, it is a generalisation of the RBF network. Several generalisations of RBF networks exist~\cite{billings2007generalized,fernandez2011melm}, but the one used in this paper is similar to the one proposed by Hunt, Haas and Murray-Smith~\cite{hunt1996extending}. Using the notation from this paper, a standard RBF network can be denoted as $y = \sum_{i=1}^\order{\beta_i g_i(x)}$, while a generalised RBF network is denoted as $y = \sum_{i=1}^{\order}{\theta_i(x) g_i(x)}$, letting the output weights depend on the input
\footnote{The generalised RBF as defined by Hunt, Haas and Murray-Smith, differs from a standard RBF network in three aspects. Only one of these aspects is considered here, namely that the output weights can depend on the input.}%
.\\ Using $\theta_i(x) = \alpha_i x + \beta_i$, where the output weights are a linear function of the input, this is equivalent to a SLM.

In the second place, a SLM differs from a RBF network in the way the hidden layer parameters are chosen. It is customary in RBF networks to let the location of the centers of the Gaussians $a_i$ be the same as some of the training samples $x_j$, or else to determine them by some sort of clustering algorithm. In a SLM, both the centers $a_i$ as well as the widths $b_i$ of the Gaussians are taken from a continuous probability distribution, and are therefore independent of the training samples
\footnote{The probability distribution of the parameters $a_i$ and $b_i$ can be any continuous probability distribution, for example the normal or the uniform distribution. However, the distribution of $b_i$ should be such that $P(b_i \leq 0) = 0$, since $b_i \in \mathbb R_{>0}$. In theory this is the only constraint, but in practice there might be more constraints due to rounding errors in the numerical evaluations of the Gaussians.}%
. This has the advantage that no prior information about the training samples is necessary, and that it takes practically no time to determine the parameters.

\subsection{TSM interpretation}\label{sec:TSM}

The generalised RBF network has been shown to be equivalent to the Takagi-Sugeno fuzzy model~\cite{takagi1985fuzzy} under some circumstances~\cite{hunt1996extending}. The Takagi-Sugeno model (TSM) consists of a number of fuzzy if-then rules with a fuzzy premise and a linear consequence, e.g.
\begin{eqnarray}
	R_i & : & \mathrm{if}\  x_1 \ \mathrm{is} \ A_1 \ \wedge \ x_2 \ \mathrm{is} \ A_2 \ \wedge \ \cdots \ \wedge \ x_m \ \mathrm{is} \ A_m \nonumber\\
	& & \mathrm{then}  \ \theta_i(x) = \alpha_i x + \beta_i
\end{eqnarray}
	
The circumstances under which the TSM is equivalent to the generalised RBF network, and therefore also to the SLM, are: $1)$ the number of fuzzy rules is equal to the number of RBF units, or local models for the SLM, $2)$ the membership functions within each rule are all Gaussians, and $3)$ the operator used for the fuzzy $\wedge$ is multiplication. Using multiplication for the if-then part as well, and letting $g_{ik}(x)$ denote the fuzzy membership function of the fuzzy set $A_k$ of fuzzy rule $R_i$, this result is straightforward:
\begin{eqnarray}
	TSM(x) & = & \sum_{i=1}^{\order} \prod_{k=1}^n{g_{ik}(x_k)} (\alpha_i x + \beta_i)\nonumber\\
	& = & \sum_{i=1}^{\order} \prod_{k=1}^n{e^{-b_{i}(x_k-a_{ik})^2}} (\alpha_i x + \beta_i)\nonumber\\
	& = & \sum_{i=1}^{\order} e^{-b_{i}\sum_{k=1}^n{(x_k-a_{ik})^2}} (\alpha_i x + \beta_i)\\
	& = & \sum_{i=1}^{\order} e^{-b_{i}||x-a_i||^2} (\alpha_i x + \beta_i)\nonumber\\
	& = & SLM(x).\nonumber
\end{eqnarray}
A SLM can therefore be seen as a TSM with randomly generated Gaussian membership functions for the premises of the fuzzy rules. This equivalence relation gives the following result:

%For fuzzy models, the following is a result of the equivalence between the Takagi-Sugeno model and the SLM (see Section \ref{sec:TSM}):
\begin{cor}
	Theorem \ref{thm:main} also holds for the Takagi-Sugeno fuzzy model with $\order$ fuzzy rules, if the membership functions for each rule are Gaussians with parameters chosen randomly from continuous probability distributions, and the operator used for the fuzzy $\wedge$ is multiplication.
\end{cor}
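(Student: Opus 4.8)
The plan is to transport Theorem~\ref{thm:main} across the TSM--SLM equivalence established just above the corollary, so the proof should be short. First I would note that the displayed chain of equalities preceding the corollary shows that, under the three stated conditions (one Gaussian membership function per input coordinate in each rule, multiplicative fuzzy $\wedge$, and multiplicative rule weighting), a TSM with $\order$ rules satisfies $TSM(x) = \sum_{i=1}^{\order} e^{-b_i||x-a_i||^2}(\alpha_i x + \beta_i)$, which is literally the defining form of an SLM with $\order$ local models (Definition~\ref{def:SLM}), with the Gaussian parameters $(a_i,b_i)$ playing the role of the fixed hidden-layer parameters and the linear consequences $\theta_i(x) = \alpha_i x + \beta_i$ playing the role of the free parameters.

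Next I would verify that the randomness hypothesis on the TSM gives exactly the randomness hypothesis required by the SLM. The centers $a_{ik}$, $k = 1,\ldots,n$, assemble into a vector $a_i = (a_{i1},\ldots,a_{in}) \in \R^n$; if the $a_{ik}$ are drawn from continuous distributions independently of the samples, then $a_i$ is drawn from a continuous distribution on $\R^n$, and $b_i$ is drawn from a continuous distribution on $\R_{>0}$. A small point worth stating explicitly: the derivation uses a single shared width $b_i$ for all membership functions in rule $R_i$, which is what collapses the product of one-dimensional Gaussians into the isotropic $e^{-b_i||x-a_i||^2}$; this is precisely the parametrisation the SLM (and hence Lemmas~\ref{lem:distinctnorms} and~\ref{lem:fullrank}) is built on, so nothing is lost.

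Finally I would invoke Theorem~\ref{thm:main} verbatim. Fitting the linear consequences of the fuzzy rules by least squares is the same computation as fitting the SLM free parameters $\Gamma$: the matrix $K$ of equation~(\ref{eq:matrixK}) is identical, it has full rank with probability one whenever $Z$ has full rank (Lemma~\ref{lem:fullrank}), and therefore, with probability one, there exists $\order \leq N/(n+1)$ such that the minimum-norm least-squares solution $\hat\Gamma = K^\dagger \desiredoutputmatrix$ yields $E = \sum_{j=1}^N ||TSM(x_j) - \desiredoutput_j||^2 < \epsilon$. This proves the corollary.

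The only real work is the bookkeeping in the middle step: confirming that the Gaussian-membership parametrisation of the TSM (one center per coordinate, one shared width per rule) aligns exactly with the $(a_i \in \R^n,\, b_i \in \R_{>0})$ parametrisation of the SLM, and that ``parameters chosen randomly from continuous probability distributions'' on the TSM side is strong enough to feed the continuity-and-independence hypotheses of the lemmas underlying Theorem~\ref{thm:main}. There is no new analytic ingredient; the entire content is the equivalence plus the theorem it carries over.
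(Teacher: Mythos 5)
Your proposal is correct and matches the paper's (largely implicit) argument: the paper also obtains the corollary by observing that the displayed chain of equalities identifies the TSM, under the three stated conditions, with an SLM having the same randomly drawn $(a_i, b_i)$, and then carries over Theorem~\ref{thm:main} directly. Your extra bookkeeping about the shared per-rule width $b_i$ and the assembly of the coordinate centers into $a_i \in \R^n$ is a useful explicit check that the paper leaves unstated, but it is the same route.
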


\subsection{LLM interpretation}\label{sec:LLMint}

LLMs like in definition \ref{def:LLM} usually appear in the context of nonlinear dynamical systems, where a complex (nonlinear) system is approximated by taking a weighted sum of simpler (linear) models. These linear models can be seen as linearisations of the nonlinear system around an operating point, and the weights are used to describe the transitions between the operating points. The problem of using a LLM to approximate a nonlinear dynamical system, belongs to the area of system identification. Several overviews of identification techniques for local models exist~\cite{murray1997multiple,verdult2002non}.

The input of such a \emph{dynamical} LLM consists of past inputs and outputs or states of the system, while the output of the dynamical LLM should give the next output or state of the system, e.g.
\begin{equation}
	s_{t+1} = f(s_t,u_t),
\end{equation}
where $s$ is the output or state, and $u$ the control input of the dynamical system. If the function $f$ is differentiable, it can be linearised around an operating point $(s^{(i)},u^{(i)})$, to get: 
\begin{eqnarray}
	f(s_t,u_t) & \approx & f(s^{(i)},u^{(i)}) + \frac{\partial f}{\partial s_t}(s^{(i)},u^{(i)}) (s_t - s^{(i)})\nonumber\\
						& & + \frac{\partial f}{\partial u_t}(s^{(i)},u^{(i)}) (u_t - u^{(i)})\nonumber \\
	& = & A_i s_t + B_i u_t + O_i,
\end{eqnarray}
where $A_i$ and $B_i$ are the partial derivatives of $f$ w.r.t. $s$ and $u$ respectively, and $O_i = f(s^{(i)},u^{(i)}) - A_i s^{(i)} - B_i u^{(i)}$. The local linear model is then a weighted sum of these linearisations:
\begin{equation}\label{eq:dLLM}
	LLM(s_t,u_t) = \sum_{i=1}^{\order}{g_i(s_t,u_t) (A_i s_t + B_i u_t + O_i)},
\end{equation}
where $g_i$ is a scalar-valued function that represents the validity of the local model. The relationship between these kinds of dynamical LLMs and TSMs has also been investigated~\cite{johansen2000interpretation}.

Going out of the area of dynamical systems, if the input of the function $f$ is denoted as $x$ rather than $(s_t,u_t)$, the LLM of equation \ref{eq:dLLM} is equal to the one from Definition \ref{def:LLM}.

Now, for a SLM, the weights are Gaussian functions of the input, but the parameters of the Gaussians (i.e. their centers and widths, or the operating points and regimes) are chosen randomly. This implies that the local models are supposed to be good approximations of the global system around randomly chosen operating points, while the traditional approach is to use a gradient-descent type algorithm to determine the operating points and regimes. Theorem \ref{thm:main} shows that even with randomly chosen operating points and regimes, it is possible to approximate any function with any desired accuracy, as long as the number of local models $\order$ is high enough, and that this can be done with a direct algorithm.

%%%%%%%%%%%%%%%%%%%%%%%%%%%%%%%%%%%%%%%%%%%%%%%%%%%%%%%%%%%%%%%%%%%%%%%%%%%%%%%%%%%%%%%%%%%%%%%%%%%%%%%%%%%%%%%%

\section{Experiments and results}
\label{sec:exp}

As an example, in this section the SLM algorithm is compared with the ELM algorithm for modelling the following dynamical system:

\begin{eqnarray}
	\frac{d x_1}{d t} & = & x_2,\nonumber\\
	\frac{d x_2}{d t} & = & \lambda(x_1^2 - 1)x_2 - x_1.
\end{eqnarray}

These are the equations of a Van der Pol oscillator. It is a nonlinear system with a limit cycle, a phenomenon that does not occur in linear systems. The experiment is implemented in Matlab, on a computer with a $2.80$ GHz processor.

\subsection{Data}
The model inputs are the $x_1$ and $x_2$ values of the Van der Pol oscillator with $\lambda=1$ for timesteps $t=1,\ldots, 1000$, while the desired model outputs are the $x_1$ and $x_2$ values for timesteps $t+1$. The values are computed with the Euler forward method. The data is seperated in three phases: the learning phase, the generalisation phase, and the simulation phase. In each phase, the dynamical system is run for $10$ different initial conditions, giving $10^4$ data samples.

\subsection{Models used}

The data is tested on a SLM with $100$ local models. The parameters $a_i$ are drawn from a zero-mean normal distribution with variance $2$, while the parameters $b_i$ are drawn from a uniform distribution over $(0,1)$. The ELM uses the same distributions for the parameters, but with $300$ hidden neurons since $\order (n+1) = 300$ in this case (see Section \ref{sec:elmint} for a comparison between the number of hidden neurons in an ELM and the number of local models in a SLM).

The experiment consists of three phases: a learning phase, a generalisation phase and a simulation phase. In the learning phase, the SLM is trained using the algorithm from Figure \ref{fig:alg}, and the ELM is trained using the ELM algorithm proposed by Huang et al.~\cite{huang2006extreme}. In the generalisation phase, only the inputs of the system are used, and the models have to produce the outputs using the model parameters obtained during the training phase. In the simulation phase, only the initial conditions of the system are given, and the models have to use the model output of the previous timestep as an input for future timesteps, also using the model parameters obtained during the training phase.

\subsection{Results and discussion}

The whole process has been repeated $100$ times. The average results are shown in Table \ref{tab:results}, where MSE stands for Mean Squared Error. The output of the training phase of the last run is shown in Figure \ref{fig:lastrunof50RLM}, for the SLM case only. The ELM case showed similar results, as well as the generalisation and simulation phases for both algorithms.

From the results we see that the SLM algorithm takes less computation time. As expected, this is not due to the calculation of the pseudo-inverse, which has actually become slower in the SLM algorithm (the reason for this is unclear). The construction of the matrix that is to be inverted has been done more efficiently in the SLM approach by making use of its block structure.

The errors during all three phases are similar in both methods, while the number of local models for the SLM was three times less than the number of hidden neurons for the ELM. This is in line with the theory presented in this paper.\\

\begin{table}[htb]
	\caption{Comparing the errors and computation time of the SLM and ELM algorithm for the modelling of a Van der Pol oscillator.}
	\label{tab:results}
	\centering
		\begin{tabular}{|p{0.3\textwidth}|p{0.17\textwidth}|p{0.17\textwidth}|}%p{0.17\textwidth}|p{0.17\textwidth}|}
			\hline
			& Average over 100 SLM tests & Average over 100 ELM tests\\% & Variance over 100 SLM tests & Variance over 100 ELM tests\\
			\hline
			Total computation time & $9.0952$ sec. & $17.8737$ sec.\\% & $18.5681$ & $34.9644$\\
			\hline
			Computation time for pseudo-inverse & $1.8038$ sec.  & $1.5514$ sec.\\% & $1.4189$ & $0.3512$\\
			\hline
			MSE during training & $2.3872 \cdot 10^{-11}$ & $1.3108 \cdot 10^{-11}$\\% & $5.9407 \cdot 10^{-22}$ & $2.0839 \cdot 10^{-22}$ \\
			\hline
			MSE during generalisation & $2.2645 \cdot 10^{-10}$ & $3.1529 \cdot 10^{-9}$\\% & $1.1023 \cdot 10^{-18}$ & $1.8606 \cdot 10^{-16}$ \\
			\hline
			MSE during simulation & $7.6113 \cdot 10^{-5}$ & $7.5266 \cdot 10^{-5}$\\% & $3.8529 \cdot 10^{-7}$ & $1.1592 \cdot 10^{-7}$\\
			\hline
		\end{tabular}
\end{table}

\begin{figure}[htb]
	\centering
		\includegraphics[width=0.99\textwidth]{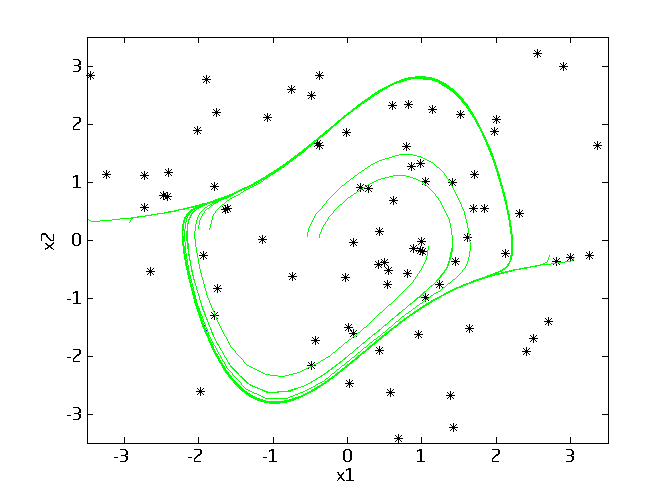}
	\caption{Modelling a Van der Pol oscillator with a Random Local Linear Model. The black dots are the locations of the centers of the Gaussians (parameters $a_i$), which are distributed randomly. Around each black dot, a certain linear model is valid. Adding the linear models together in a nonlinear fashion results in a global model that accurately approximates the nonlinear dynamics of the oscillator.} %From left to right, the training phase, generalisation phase, and simulation phase are shown respectively.}
	\label{fig:lastrunof50RLM}
\end{figure}

\section{Conclusions}
\label{sec:concl}

An extension to Extreme Learning Machines (ELM) was proposed in this paper, together with an algorithm and theorems concerning the approximation capabilities of the Random Local Linear Model (SLM). Under certain conditions, the SLM can achieve similar approximation capabilities as the ELM while using less local models than the number of hidden nodes of an ELM. Although this is not necessarily an advantage, the SLM allows for a clear physical interpretation and can be run more efficiently than an ELM when the number of local models and hidden nodes is high.

The SLM is similar to Local Linear Models from the area of system identification and, under some circumstances, to the well-known Takagi-Sugeno Model from the area of fuzzy modelling, for which an upper bound on the number of randomly generated rules is given. This also makes them more fit to be used for modelling dynamical systems. Compared to these techniques, the training of a SLM can be done more efficiently, since the operating points or fuzzy rules can be chosen randomly and independent from the input, which implies that less prior knowledge is required. The training algorithm then uses linear regression, which can be performed much faster than most other parameter estimation methods.

The theorems and proofs of this paper could be extended to more general cases. However, the link with system identification and fuzzy modelling techniques already makes sure that the SLM could be an alternative to some very popular techniques, and it allows much room for further investigation.

\section*{Acknowledgment}

This research was performed at Almende B.V. in Rotterdam, the Netherlands. The author would like to thank Anne van Rossum and Dimitri Jeltsema for their supervision during this research, and Giovanni Pazienza for providing valuable feedback on this paper.

%% The Appendices part is started with the command \appendix;
%% appendix sections are then done as normal sections
%% \appendix

%% \section{}
%% \label{}

%% References
%%
%% Following citation commands can be used in the body text:
%% Usage of \cite is as follows:
%%   \cite{key}          ==>>  [#]
%%   \cite[chap. 2]{key} ==>>  [#, chap. 2]
%%   \citet{key}         ==>>  Author [#]

%% References with bibTeX database:

\bibliographystyle{elsarticle-num}
\bibliography{mybiblio}

%% Authors are advised to submit their bibtex database files. They are
%% requested to list a bibtex style file in the manuscript if they do
%% not want to use model1a-num-names.bst.

%% References without bibTeX database:

% \begin{thebibliography}{00}

%% \bibitem must have the following form:
%%   \bibitem{key}...
%%

% \bibitem{}

% \end{thebibliography}

\bigskip

Laurens Bliek received the B.Sc. degree in applied mathematics at Delft University of Technology in 2011, and is currently graduating for the M.Sc. degree of the same study. His minor, B.Sc thesis, and many of his electives, had to do with artificial intelligence and neural networks. His research interests include neural networks, dynamical systems theory and optimisation.

\begin{figure}[htbp]
	\centering
		\includegraphics[width = 0.2\textwidth]{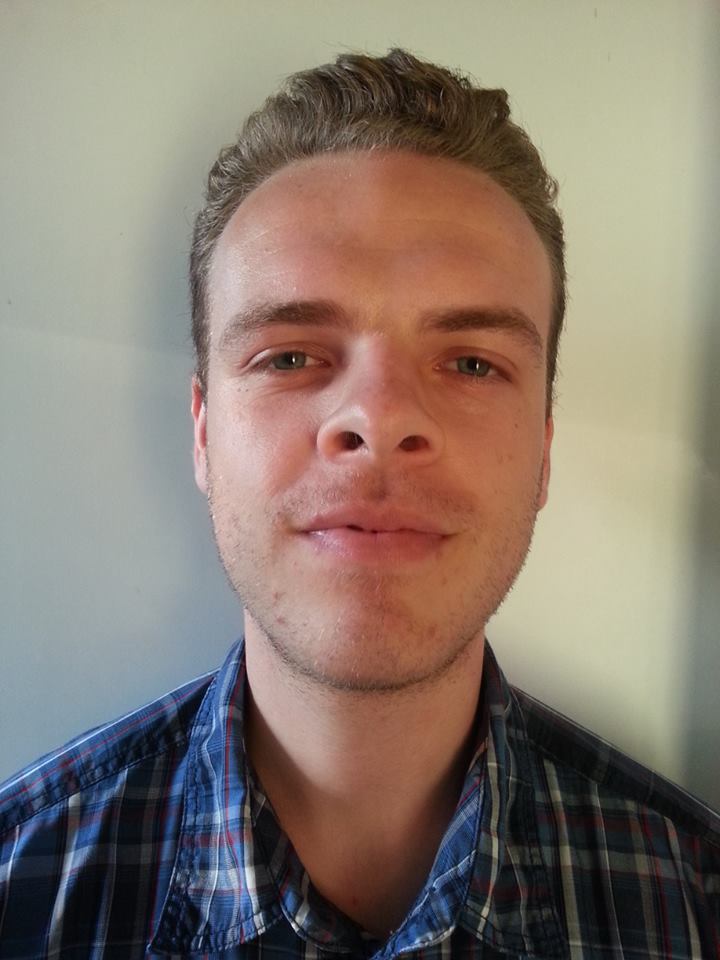}
	\label{fig:Laurens2013}
\end{figure}

\end{document}